\newtheorem{Defi}{Definition}[section]
\newtheorem{thm}[Defi]{Theorem}
\newtheorem{lemma}[Defi]{Lemma}
\newtheorem{lem}[Defi]{Lemma}
\newtheorem{definition}[Defi]{Definition}
\newtheorem{cor}[Defi]{Corollary}
\newtheorem{rem}[Defi]{Remark}
\def\R{{\rm I\!R}}
\def\trait (#1) (#2) (#3){\vrule width #1pt height #2pt depth #3pt}
\def\qed{\hfill \trait (0.1) (6) (0) \trait (6) (0.1) (0) \kern-6pt
\trait (6) (6) (-5.9) \trait (0.1) (6) (0) }
\def \R{\mathbb{R}}
\def \E{\mathbb{E}}
\def \bf{\textbf}
\def \ni {\noindent}
\begin{document}

\setcounter{page}{1}



\mbox{}
\title [Existence and Uniqueness   of Mild Solutions to
   NSFDEs driven by a  fBm  ]{
\ni {\bf   Existence and Uniqueness of Mild Solutions to Neutral
SFDEs driven by a  Fractional Brownian Motion  with Non-Lipschitz
Coefficients } }\maketitle
 \hrule
 \bigskip

\bigskip


\begin{center}
 {S., HAJJI$^1$  and  E., LAKHEL $^2$}\\
 { \ni $^1$Department of Mathematics, Faculty of Sciences Semlalia, Cadi Ayyad University, 2390 Marrakesh,
 Morocco}\\
 {\ni $^2$National School of Applied Sciences, Cadi Ayyad University, 46000 Safi , Morocco}

\end{center}

\medskip

 \footnotetext[1]{Lakhel E.: e.lakhel@uca.ma (Corresponding author) and Hajji S.: hajjisalahe@gmail.com   }

\medskip


\bigskip

\noindent {\bf Abstract---}{\rm
 The article presents results on existence and uniqueness of mild solutions to a class of non
linear neutral stochastic functional   differential equations
(NSFDEs) driven   by Fractional Brownian motion  in a Hilbert
space with non-Lipschitzian coefficients. The results are obtained
by using the method of Picard approximation and generalize the
results that were reported by \cite{ boufoussi3}.}

\bf{keyword} Mild solution; Semigroup of bounded linear operator;
Fractional powers of closed operators;  Fractional Brownian
motion; Wiener integral; Mild Solutions.
\\

\section{INTRODUCTION}
Stochastic partial differential equations (SPDEs) are considered
by many authors (  see, for example, \cite{daprato}) where the
random disturbances are described by stochastic integrals with
respect to semimartingales, especially by Wiener processes.
However, the Wiener process is not suitable to represent a noise
process if long-range dependence is modelled (see \cite{peter}).
It is then desirable to replace the Wiener process by  fractional
Brownian motions (fbm).  Over the last years some new techniques
have
  been developed in order to
define stochastic integrals with respect to fBm. The study of
solutions of stochastic equations in infinite-dimensional space
 with a (cylindrical) fractional Brownian motion ( for example, stochastic partial differential equations)
   has been relatively limited. Linear and semilinear equations with additive fractional noise
   (the formal derivative of a fbm)
are considered in  \cite{Grec} , \cite{danc} and the same type of
equation is studied
 recently in \cite{carab}.

 Let us now say a few words on stochastic functional  differential equations (SFDEs) driven by a
 fBm.    SFDEs  arise in many areas of applied mathematics.
 For this reason, the study of this type of equations has been receiving increased attention in the last few years.
   In \cite{Ferrante1}, the authors studied the existence and regularity
of the density  by using Skorohod integral based on the Malliavin
calculus. \cite{Neuenkirch} studied the problem  by using rough
path analysis.
     \cite{Ferrante2} studied the existence and convergence when the delay goes to zero by using the
      Riemann-Stieltjes integral.  Using also the Riemann-Stieltjes integral,
       \cite{boufoussi2} proved the existence and
    uniqueness of a mild solution and studied the dependence of the
    solution on the initial condition in finite and infinite dimensional
    space.

However, in some cases, many stochastic dynamical systems depend
not only on present and past states, but also contain the
derivatives with delays (see, e.g., \cite{kolma} and
\cite{kuang}). Neutral stochastic differential equations with
delays are often used to describe such systems. To the best of our
knowledge, there is only a little systematic investigation on the
study  of mild solutions to neutral SPDEs with delays ( see, e.g.,
\cite{ boufoussi3} and references therein).

 In  this paper, motivated by the previous references, we are concerned with the  existence and uniqueness of mild
solutions for a class of neutral functional stochastic
differential equations (FSDEs)   described in the form

\begin{eqnarray}\label{jeq1}
d[x(t)+g(t,x(\rho(t)))]&=&[Ax(t)+f(t,x(\rho(t))]dt+\sigma (t)dB_Q^H(t),\;0\leq t \leq T,\nonumber\\
x(t)&=&\varphi (t), \; -r \leq t \leq 0.
\end{eqnarray}

where $A$ is  the infinitesimal generator of an analytic
semigroup, $(T (t))_{t\geq 0}$, of bounded linear operators in a
separable Hilbert space $X$; $B_Q^H$ is a fractional Brownian
motion   on a  Hilbert space $Y$ (see section 2 below);  $f$ , $g$
 and  $\sigma$ are given functions to be
specified later, $\rho:[0,+\infty)\longrightarrow [-r,+\infty)$ is
a suitable delay function,  $\varphi:[-r,0]\times
\Omega\longrightarrow X$ is  the initial value.

 The gaol of this work is to establish an existence and uniqueness result for
  mild solution of equation (\ref{jeq1}). The results are obtained by imposing a condition on
the non linearities, which is weaker than the classical Lipschitz
condition and generalize the results that were reported by \cite{
boufoussi3}.   Our approach is similar to the one in \cite{mahmud}
and     \cite{bao} in the case of Wiener process. The  rest of
this paper is organized as follows. In Section 2 we give a brief
review and  preliminaries needed to establish our results. Section
3  is  devoted to the study of  existence and uniqueness of mild
solution of (\ref{jeq1}) by using a Picard type iteration.
\section{Preliminaries}

In this section, we introduce notations, definitions and
preliminary results which we require to establish the existence
and uniqueness of a solution of equation (\ref{jeq1}).

Let $(\Omega,\mathcal{F}, \mathbb{P})$ be a complete probability
space.   Consider a time interval $[0,T]$ with arbitrary fixed
horizon $T$ and let $\{\beta^H(t) , t \in [0, T ]\}$ the
one-dimensional fractional Brownian motion with Hurst parameter
$H\in(1/2,1)$. This means by definition that $\beta^H$ is a
centered Gaussian process with covariance function:
$$ R_H(s, t) =\frac{1}{2}(t^{2H} + s^{2H}-|t-s|^{2H}).$$
 Moreover $\beta^H$ has the following Wiener
integral representation:
\begin{equation}\label{rep}
\beta^H(t) =\int_0^tK_H(t,s)d\beta(s)
 \end{equation}
where $\beta = \{\beta(t) :\; t\in [0,T]\}$ is a Wiener process,
and $K_H(t; s)$ is the kernel given by
$$K_H(t, s )=c_Hs^{\frac{1}{2}-H}\int_s^t (u-s)^{H-\frac{3}{2}}u^{H-\frac{1}{2}}du$$
for $t>s$, where $c_H=\sqrt{\frac{H(2H-1)}{\beta (2-2H,H-\frac{1}{2})}}$ and $\beta(,)$ denotes the Beta function. We put $K_H(t, s ) =0$ if $t\leq s$.\\
We will denote by $\mathcal{H}$ the reproducing kernel Hilbert
space of the fBm. In fact $\mathcal{H}$ is the closure of set of
indicator functions $\{1_{[0;t]},  t\in[0,T]\}$ with respect to
the scalar product
$$\langle 1_{[0,t]},1_{[0,s]}\rangle _{\mathcal{H}}=R_H(t , s).$$
The mapping $1_{[0,t]}\rightarrow \beta^H(t)$
 can be extended to an isometry between $\mathcal{H}$
and the first  Wiener chaos and we will denote by
$\beta^H(\varphi)$ the image of $\varphi$ by the previous
isometry.

We recall that for $\psi,\varphi \in \mathcal{H}$ their scalar
product in $\mathcal{H}$ is given by
$$\langle \psi,\varphi\rangle _{\mathcal{H}}=H(2H-1)\int_0^T\int_0^T\psi(s)\varphi(t)|t-s|^{2H-2}dsdt.$$
Let us consider the operator $K_H^*$ from $\mathcal{H}$ to
$L^2([0,T])$ defined by
$$(K_H^*\varphi)(s)=\int_s^T\varphi(r)\frac{\partial K}{\partial
r}(r,s)dr.$$ We refer to \cite{nualart} for the proof of the fact
that $K_H^*$ is an isometry between $\mathcal{H}$ and
$L^2([0,T])$. Moreover for any $\varphi \in \mathcal{H}$, we have
$$\beta^H(\varphi)=\int_0^T(K_H^*\varphi)(t)d\beta(t).$$
It follows from \cite{nualart} that the elements of $\mathcal{H}$
may be not functions but distributions of negative order. In order
to obtain a space of functions contained in $\mathcal{H}$, we
consider the linear space $|\mathcal{H}|$ generated by the
measurable functions $\psi$ such that
$$\|\psi \|^2_{|\mathcal{H}|}:= \alpha_H  \int_0^T \int_0^T|\psi(s)||\psi(t)| |s-t|^{2H-2}dsdt<\infty,$$
where $\alpha_H = H(2H-1)$. The space $|\mathcal{H}|$ is a Banach
space with the norm  $\|\psi\|_{|\mathcal{H}|}$ and we have the
following inclusions (see \cite{nualart})
\begin{lemma}\label{lem1}
$$\mathbb{L}^2([0,T])\subseteq \mathbb{L}^{1/H}([0,T])\subseteq |\mathcal{H}|\subseteq \mathcal{H},$$
and for any $\varphi\in \mathbb{L}^2([0,T])$, we have
$$\|\psi\|^2_{|\mathcal{H}|}\leq 2HT^{2H-1}\int_0^T
|\psi(s)|^2ds.$$
\end{lemma}
Let $X$ and $Y$ be two real, separable Hilbert spaces and let
$\mathcal{L}(Y,X)$ be the space of bounded linear operator from
$Y$ to $X$. For the sake of convenience, we shall use the same
notation to denote the norms in $X,Y$ and $\mathcal{L}(Y,X)$. Let
$Q\in \mathcal{L}(Y,Y)$ be an operator defined by $Qe_n=\lambda_n
e_n$ with finite trace
 $trQ=\sum_{n=1}^{\infty}\lambda_n<\infty$, where $\lambda_n \geq 0 \; (n=1,2...)$ are non-negative
  real numbers and $\{e_n\}\;(n=1,2...)$ is a complete orthonormal basis in $Y$.
 Let $B^H=(B^H(t))$ be  $Y-$ valued fbm on
  $(\Omega,\mathcal{F}, \mathbb{P})$ with covariance $Q$ as
 $$B^H(t)=B^H_Q(t)=\sum_{n=1}^{\infty}\sqrt{\lambda_n}e_n\beta_n^H(t),$$
 where $\beta_n^H$ are real, independent fBm's. This process is  Gaussian, it
 starts from $0$, has zero mean and covariance:
 $$E\langle B^H(t),x\rangle\langle B^H(s),y\rangle=R(s,t)\langle Q(x),y\rangle \;\; \mbox{for all}\; x,y \in Y
 \;\mbox {and}\;  t,s \in [0,T].$$
In order to define Wiener integrals with respect to the $Q$-fBm,
we introduce the space $\mathcal{L}_2^0:=\mathcal{L}_2^0(Y,X)$  of
all $Q$-Hilbert-Schmidt operators $\psi:Y\rightarrow X$. We recall
that $\psi \in \mathcal{L}(Y,X)$ is called a $Q$-Hilbert-Schmidt
operator, if
$$  \|\psi\|_{\mathcal{L}_2^0}^2:=\sum_{n=1}^{\infty}\|\sqrt{\lambda_n}\psi e_n\|^2 <\infty,
$$
and that the space $\mathcal{L}_2^0$ equipped with the inner
product
$\langle \varphi,\psi \rangle_{\mathcal{L}_2^0}=\sum_{n=1}^{\infty}\langle \varphi e_n,\psi e_n\rangle$ is a
separable Hilbert space.\\

Now, let $\phi(s);\,s\in [0,T]$ be a function with values in
$\mathcal{L}_2^0(Y,X)$.
 The Wiener integral of $\phi$ with respect
to $B^H$ is defined by

\begin{equation}\label{int}
\int_0^t\phi(s)dB^H(s)=\sum_{n=1}^{\infty}\int_0^t
\sqrt{\lambda_n}\phi(s)e_nd\beta^H_n(s)=\sum_{n=1}^{\infty}\int_0^t
\sqrt{\lambda_n}(K_H^*(\phi e_n)(s)d\beta_n(s),
\end{equation}
where $\beta_n$ is the standard Brownian motion used to  present $\beta_n^H$ as in $(\ref{rep})$.\\
Now, we end this subsection by stating the following result which
is fundamental to prove our result. It can be proved by  similar
arguments as those used to prove   Lemma 2 in \cite{carab}.
\begin{lemma}\label{lem2}
If $\psi:[0,T]\rightarrow \mathcal{L}_2^0(Y,X)$ satisfies
$\int_0^T \|\psi(s)\|^2_{\mathcal{L}_2^0}ds<\infty$.  Then the
above sum in $(\ref{int})$ is well defined as a $X$-valued random
variable and
 we have$$ \mathbb{E}\|\int_0^t\psi(s)dB^H(s)\|^2\leq 2Ht^{2H-1}\int_0^t \|\psi(s)\|_{\mathcal{L}_2^0}^2ds.
 $$
\end{lemma}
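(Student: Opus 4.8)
The plan is to exploit the series representation $(\ref{int})$ together with the independence of the driving Brownian motions $\{\beta_n\}$, reducing the second-moment estimate to a term-by-term application of Lemma \ref{lem1}. First I would fix an orthonormal basis $\{f_m\}_{m\geq 1}$ of $X$ and write, for each fixed $n$, the $X$-valued Wiener integral $\int_0^t (K_H^*(\psi e_n))(s)\,d\beta_n(s)$ coordinatewise as $\sum_m\bigl(\int_0^t \langle\psi(s)e_n,f_m\rangle\,d\beta^H_n(s)\bigr)f_m$. Each coordinate is an ordinary scalar Wiener integral of a deterministic $L^2$ integrand against the one-dimensional fBm $\beta^H_n$, whose second moment equals $\|\langle\psi(\cdot)e_n,f_m\rangle\mathbf{1}_{[0,t]}\|^2_{\mathcal H}$ by the isometry property of $K_H^*$ recalled above; since $\|\cdot\|_{\mathcal H}\leq\|\cdot\|_{|\mathcal H|}$ and $L^2([0,T])\subseteq|\mathcal H|$, the bound in Lemma \ref{lem1} applies and yields $\mathbb{E}[(\int_0^t\langle\psi(s)e_n,f_m\rangle\,d\beta^H_n(s))^2]\leq 2Ht^{2H-1}\int_0^t|\langle\psi(s)e_n,f_m\rangle|^2\,ds$.

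Next I would sum over $m$, using Parseval's identity $\sum_m|\langle\psi(s)e_n,f_m\rangle|^2=\|\psi(s)e_n\|^2$ (after interchanging the sum over $m$ with the time integral, which is legitimate by Tonelli since all terms are nonnegative), to obtain $\mathbb{E}\|\int_0^t(K_H^*(\psi e_n))(s)\,d\beta_n(s)\|^2\leq 2Ht^{2H-1}\int_0^t\|\psi(s)e_n\|^2\,ds$. Because the $\beta_n$ are independent and each scalar integral is centered, the summands in $(\ref{int})$ are mutually orthogonal in $L^2(\Omega;X)$, so the cross terms drop out and $\mathbb{E}\|\sum_{n=1}^N\sqrt{\lambda_n}\int_0^t(K_H^*(\psi e_n))(s)\,d\beta_n(s)\|^2=\sum_{n=1}^N\lambda_n\,\mathbb{E}\|\int_0^t(K_H^*(\psi e_n))(s)\,d\beta_n(s)\|^2$. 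Combining this with the previous bound and recalling that $\sum_n\lambda_n\|\psi(s)e_n\|^2=\|\psi(s)\|^2_{\mathcal L_2^0}$ gives the desired inequality for each partial sum $S_N$.

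Finally, well-definedness of the full series follows from the same estimate applied to tails: for $M<N$ one has $\mathbb{E}\|S_N-S_M\|^2\leq 2Ht^{2H-1}\int_0^t\sum_{n=M+1}^N\lambda_n\|\psi(s)e_n\|^2\,ds$, which tends to $0$ as $M,N\to\infty$ by the hypothesis $\int_0^T\|\psi(s)\|^2_{\mathcal L_2^0}\,ds<\infty$ together with monotone convergence. Hence $(S_N)$ is Cauchy in the complete space $L^2(\Omega;X)$, its limit defines the integral as an $X$-valued random variable, and passing to the limit in the partial-sum inequality (whose right-hand side is independent of $N$) delivers the stated bound. I expect the main technical point to be the careful justification of the two interchanges of infinite summation and integration and of the orthogonality reduction; once the scalar isometry is combined with Lemma \ref{lem1}, the rest is bookkeeping. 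An alternative to the coordinatewise expansion is to apply the standard It\^o isometry directly to $\int_0^t(K_H^*(\psi e_n))(s)\,d\beta_n(s)$ and then invoke the $K_H^*$ isometry between $\mathcal H$ and $L^2([0,T])$ followed by Lemma \ref{lem1}; this avoids introducing $\{f_m\}$ but requires interpreting $K_H^*$ on $X$-valued integrands.
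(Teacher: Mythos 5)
Your argument is correct: the coordinatewise reduction to scalar fBm Wiener integrals, the isometry of $K_H^*$ combined with the chain $L^2([0,t])\subseteq|\mathcal{H}|\subseteq\mathcal{H}$ and the bound from Lemma \ref{lem1}, the orthogonality of the summands coming from the independence of the $\beta_n$, and the tail estimate for well-definedness together give exactly the stated inequality. Note that the paper itself contains no proof of Lemma \ref{lem2} --- it only remarks that the result follows by the same arguments as Lemma 2 of \cite{carab} --- and the argument in that reference is essentially the one you have written out, so your proposal simply supplies the details the paper omits.
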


 Let $A:D(A)\rightarrow X$  be the infinitesimal generator of an
analytic semigroup, $(T (t))_{t\geq 0}$, of bounded linear
operators on $X$. For the theory of strongly continuous semigroup,
we refer to \cite{pazy}.  We will point out here some notations
and properties that will be used in this work. Hence, for
convenience, we suppose that $\|T(t)\|\leq M$ for $t\geq0$,  and
$0 \in \rho(A)$,  where $\rho (A)$ is the resolvent set of $A$,
then it is possible to define the fractional power $(-A)^{\alpha}$
for $0 < \alpha \leq 1$, as a closed linear operator on its domain
$D(-A)^{\alpha}$. Furthermore, the subspace $D(-A)^{\alpha}$ is
dense in $X$, and the expression
$$\|h\|_{\alpha} =\|(-A)^{\alpha}h\|,
$$
 defines a norm in
$D(-A)^{\alpha}$. If $H_{\alpha}$ represents the space
$D(-A)^{\alpha}$ endowed with the norm $\|.\|_{\alpha}$, then the
following properties are well known (cf. \cite{pazy}, p. 74).
\begin{lem}\label{jlem1} Suppose that the preceding conditions are satisfied.\\
(1) Let $0<\alpha \leq  1$. Then $H_{\alpha}$ is a Banach space.\\
(2) If $0 <\beta \leq \alpha $ then the injection $H_{\alpha}
\hookrightarrow
H_{\beta}$ is continuous.\\
 (3) For every $0<\alpha \leq 1$ there exists $C_{\alpha} > 0 $ such that
 $$\| (-A)^{\alpha}T (t)\|\leq\frac{C_{\alpha}}{t^{\alpha}}
 , \;\;0 < t \leq T.$$
 \end{lem}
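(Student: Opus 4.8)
The plan is to derive all three statements from the standard construction of the fractional powers of $-A$ via negative powers, in the spirit of \cite{pazy}. First I would observe that since $(T(t))_{t\ge0}$ is analytic, $\|T(t)\|\le M$ and $0\in\rho(A)$, a standard spectral argument (the spectral bound equals the growth bound for analytic semigroups) upgrades the boundedness to exponential stability, $\|T(t)\|\le Me^{-\nu t}$ for some $\nu>0$. This is precisely what makes the Bochner integral
$$(-A)^{-\alpha}=\frac{1}{\Gamma(\alpha)}\int_0^{\infty}s^{\alpha-1}T(s)\,ds$$
converge in $\mathcal{L}(X)$ for every $\alpha>0$ and define a bounded, injective operator. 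One then sets $(-A)^{\alpha}$ to be its inverse, so that $H_{\alpha}=D((-A)^{\alpha})=\mathrm{Range}((-A)^{-\alpha})$, and records the additivity law $(-A)^{\alpha}(-A)^{\beta}=(-A)^{\alpha+\beta}$ on the appropriate domains.

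For statement (1), I would argue that $(-A)^{\alpha}$ is a closed operator, being the inverse of the bounded injective operator $(-A)^{-\alpha}$. Consequently $D((-A)^{\alpha})$ is complete for the graph norm $\|h\|+\|(-A)^{\alpha}h\|$. Since $(-A)^{-\alpha}$ is bounded, $\|h\|=\|(-A)^{-\alpha}(-A)^{\alpha}h\|\le\|(-A)^{-\alpha}\|\,\|(-A)^{\alpha}h\|$, so the graph norm is equivalent to $\|h\|_{\alpha}=\|(-A)^{\alpha}h\|$ alone; hence $(H_{\alpha},\|\cdot\|_{\alpha})$ is a Banach space. For statement (2), with $0<\beta\le\alpha\le1$ the operator $(-A)^{-(\alpha-\beta)}$ is bounded by the same integral formula, and the additivity law gives, for $h\in H_{\alpha}$, the identity $(-A)^{\beta}h=(-A)^{-(\alpha-\beta)}(-A)^{\alpha}h$. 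Therefore $h\in H_{\beta}$ and $\|h\|_{\beta}\le\|(-A)^{-(\alpha-\beta)}\|\,\|h\|_{\alpha}$, which is exactly the continuity of the injection $H_{\alpha}\hookrightarrow H_{\beta}$.

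The real content is statement (3), and that is where I expect the main work to lie. The starting point is the analytic smoothing estimate $\|AT(t)\|\le C_{1}/t$ for $0<t\le T$, obtained from the Cauchy integral representation of $T(t)$ along a sectorial contour (equivalently, by differentiating the analytic extension of the orbit $t\mapsto T(t)x$). To pass from this integer power to the fractional one I would invoke the moment (interpolation) inequality $\|(-A)^{\alpha}x\|\le C\|(-A)x\|^{\alpha}\|x\|^{1-\alpha}$, valid for $0<\alpha<1$, and apply it to $x=T(t)y$. Combining $\|(-A)T(t)y\|\le C_{1}t^{-1}\|y\|$ with $\|T(t)y\|\le M\|y\|$ then yields $\|(-A)^{\alpha}T(t)y\|\le C_{\alpha}t^{-\alpha}\|y\|$, which is the desired bound (the case $\alpha=1$ being the smoothing estimate itself). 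An alternative route bypassing the moment inequality is to write $(-A)^{\alpha}T(t)=\frac{1}{2\pi i}\int_{\Gamma}(-\lambda)^{\alpha}e^{\lambda t}(\lambda-A)^{-1}\,d\lambda$, with a suitable branch of $(-\lambda)^{\alpha}$ and a sectorial contour $\Gamma$, and then estimate by the substitution $\lambda\mapsto\lambda/t$, which reproduces the factor $t^{-\alpha}$.

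In either approach the delicate point is controlling the singular behaviour as $t\to0^{+}$, and securing the analytic estimate $\|AT(t)\|\le C_{1}/t$ together with its fractional refinement is the crux of the lemma; by contrast, statements (1) and (2) are essentially formal once the bounded negative powers $(-A)^{-\alpha}$ are in hand.
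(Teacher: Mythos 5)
Your proposal is correct and follows the standard construction of fractional powers via the Bochner integral $(-A)^{-\alpha}=\frac{1}{\Gamma(\alpha)}\int_0^{\infty}s^{\alpha-1}T(s)\,ds$, the closedness of $(-A)^{\alpha}$, and the moment inequality combined with the analytic smoothing bound $\|AT(t)\|\le C_1/t$ — which is precisely the argument in Pazy's book that the paper cites for this lemma without reproducing a proof. Nothing further is needed.
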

 Finally, we remark that for the proof of our theorem we need the
following   Bihari's inequality (cf. \cite{biha}).

\begin{lem}\label{elem4}
Let $\rho: \mathbb{R}^+\rightarrow \mathbb{R}^+$ be a continuous
and non-decreasing function and let $g,h,\lambda $ be non-negative
functions on $\mathbb{R}^+$ such that
  $$g(t)\leq h(t)+\int_0^t \lambda(s)\rho (g(s))ds,\;\;t\geq 0,$$
  then $$g(t)\leq G^{-1}\left(G(h^*(t))+\int_0^t \lambda (s)ds\right),$$
where $G(x):=\int_{x_0}^x \frac{1}{\rho (y)}dy$ is well defined
for some $x_0>0$, $G^{-1}$ is the inverse function of $G$ and $
h^*(t):=\sup_{s\leq t}h(s)$. In particular, we have the
Gronwall-Bellman lemma:
\newline If   $$g(t)\leq h(t)+\int_0^t \lambda(s)g(s)ds,$$
  then $$g(t)\leq h^*(t)\exp\left(\int_0^t \lambda (s)ds\right).$$
\end{lem}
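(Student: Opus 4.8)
The plan is to reduce the integral inequality to an ordinary differential inequality for a majorizing function and then integrate it after the change of variables governed by $G$. First I would fix an arbitrary horizon $T>0$ and work on $[0,T]$; since $h(s)\le h^*(T)$ for all $s\le T$ and $\rho$ is non-decreasing, the hypothesis gives $g(t)\le h^*(T)+\int_0^t\lambda(s)\rho(g(s))\,ds$ on $[0,T]$. To avoid dividing by zero where $\rho$ may vanish, I would introduce a parameter $\varepsilon>0$ and set
$$v_\varepsilon(t)=h^*(T)+\varepsilon+\int_0^t\lambda(s)\rho(g(s))\,ds,$$
so that $g(t)\le v_\varepsilon(t)$ and $v_\varepsilon(t)\ge\varepsilon>0$ throughout $[0,T]$.

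The second step is the differential comparison. The function $v_\varepsilon$ is absolutely continuous with $v_\varepsilon'(t)=\lambda(t)\rho(g(t))$ for almost every $t$. Combining $g(t)\le v_\varepsilon(t)$ with the monotonicity of $\rho$ gives $v_\varepsilon'(t)\le\lambda(t)\rho(v_\varepsilon(t))$. Since $G'(x)=1/\rho(x)$ and $\rho(v_\varepsilon)>0$, the chain rule yields $\frac{d}{dt}G(v_\varepsilon(t))=v_\varepsilon'(t)/\rho(v_\varepsilon(t))\le\lambda(t)$ for almost every $t$. Integrating from $0$ to $t$ and using $v_\varepsilon(0)=h^*(T)+\varepsilon$ produces
$$G(v_\varepsilon(t))\le G(h^*(T)+\varepsilon)+\int_0^t\lambda(s)\,ds.$$

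The third step is inversion and passage to the limit. Because $G$ is strictly increasing (its derivative $1/\rho$ is positive), $G^{-1}$ is increasing, so the previous bound gives $g(t)\le v_\varepsilon(t)\le G^{-1}\bigl(G(h^*(T)+\varepsilon)+\int_0^t\lambda(s)\,ds\bigr)$. Letting $\varepsilon\downarrow 0$ and using the continuity of $G$ and $G^{-1}$ yields $g(t)\le G^{-1}\bigl(G(h^*(T))+\int_0^t\lambda(s)\,ds\bigr)$ for $t\in[0,T]$; evaluating at $t=T$ and letting $T$ range over $\mathbb{R}^+$ gives the claimed inequality for every $t$. For the Gronwall--Bellman special case I would take $\rho(y)=y$, compute $G(x)=\ln(x/x_0)$ and $G^{-1}(z)=x_0e^{z}$, and substitute to recover $g(t)\le h^*(t)\exp\bigl(\int_0^t\lambda(s)\,ds\bigr)$.

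I expect the main obstacle to be the technical bookkeeping around the degeneracies of $\rho$: the regularizing constant $\varepsilon$ is needed precisely to keep $\rho(v_\varepsilon)$ bounded away from zero so that the division and the chain rule are legitimate, and one must verify that the argument $G(h^*(T)+\varepsilon)+\int_0^t\lambda\,ds$ stays in the range of $G$ so that $G^{-1}$ is applicable before passing to the limit. Replacing $h$ by the constant $h^*(T)$ on each fixed interval is the device that circumvents the lack of monotonicity and differentiability of $h$ itself, reducing everything to the clean autonomous comparison above.
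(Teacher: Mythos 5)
Your proof is correct, but there is nothing in the paper to compare it against: the authors state this lemma as a known result (Bihari's inequality), citing Bihari's 1956 article, and give no proof of their own. Your argument is the standard proof of that inequality: freeze $h$ at $h^*(T)$ on a fixed interval, regularize with $\varepsilon$ so that the majorant $v_\varepsilon$ stays in the region where $\rho$ is positive, pass to the differential inequality $v_\varepsilon'\le \lambda\,\rho(v_\varepsilon)$ by monotonicity of $\rho$, integrate $(G\circ v_\varepsilon)'\le\lambda$, invert using the strict monotonicity of $G$, and let $\varepsilon\downarrow 0$. The caveats you flag are genuine but are defects of the lemma's loose statement rather than of your argument: the statement implicitly assumes that $1/\rho$ is integrable on the relevant range (hence $\rho>0$ there, which is what legitimizes your chain-rule step), that $s\mapsto\lambda(s)\rho(g(s))$ is locally integrable (so that $v_\varepsilon$ is absolutely continuous), and that the quantity fed to $G^{-1}$ lies in the range of $G$ (otherwise one must read $G^{-1}$ as $+\infty$, or restrict $t$, as in Bihari's original formulation). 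One small refinement: when $h^*(t)=0$ and $G(0^+)=-\infty$, the passage $\varepsilon\downarrow 0$ should invoke monotonicity of $G^{-1}$ rather than continuity at a point where $G$ is not finite; with that reading, your limit argument and the specialization $\rho(y)=y$, which gives the Gronwall--Bellman bound $h^*(t)\exp\bigl(\int_0^t\lambda(s)\,ds\bigr)$, are both correct.
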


\section{The Main Result}

In this section we study the existence and uniqueness of mild
 solution of equation (\ref{jeq1}).  Henceforth we will assume that $A$ is the
infinitesimal generator of an analytic semigroup, $(T (t))_{t\geq
0}$, of bounded linear operators on $X$. Further, to avoid
unnecessary notations, we suppose that $0 \in \rho (A)$ and that,
see Lemma \ref{jlem1},

$$ \| T (t)\|\leq M\; \;\;\mbox{and}\;\;  \|(-A)^{1-\beta}T(t)\|\leq \frac{C_{1-\beta}}{t^{1-\beta}} $$ for
some constants $M,\; M_{1-\beta}$ and every $t \in [0,T].$\\
Similar to the deterministic situation we give the following
definition of mild solutions for equation (\ref{jeq1}).

\begin{definition}
A $X$-valued  process $\{x(t),\;t\in[-r,T]\}$, is called  a mild
solution of equation (\ref{jeq1}) if
\begin{itemize}
\item[$i)$] $x(.)\in \mathcal{C}([-r,T],\mathbb{L}^2(\Omega,X))$,
\item[$ii)$] $x(t)=\varphi(t), \, -r \leq t \leq 0$.
\item[$iii)$]For arbitrary $t \in [0,T]$, we have
\begin{eqnarray*}
x(t)&=&T(t)(\varphi(0)+g(0,\varphi(\rho(0))))-g(t,x(\rho(t)))\\
&-& \int_0^t AT(t-s)g(s,x(\rho(s)))ds +\int_0^t T(t-s)f(s,x(\rho (s))ds\\
&+&\int_0^tT(t-s)\sigma(s)dB^H(s)\;\;\; \mathbb{P}-a.s.
\end{eqnarray*}
\end{itemize}
\end{definition}
In order to show the existence and the uniqueness of mild solution
 to equation (\ref{jeq1}),    the following weaker
conditions (instead of the global Lipschitz condition and linear
growth) are listed.

\begin{itemize}
\item [$(\mathcal{H}.1)$]    $f:[0,T]\times X \rightarrow X  $ and $\sigma:[0,T]\longrightarrow \mathcal{L}^0_2(Y,X)$
 satisfying
 the following conditions:
there exists  a function $K:
[0,+\infty)\times[0,+\infty)\rightarrow [0,+\infty)$ such that
\begin{itemize}
\item [$(1a)$]  $\forall t$  $K(t,.)$ is continuous
non-decreasing, concave, and for each fixed $x\in\R_+$,
$\int_0^TK(s,x)ds<\infty$
 \item [$(1b)$] For any fixed $t\in
[0,T]$ and $x\in X$
$$\|f(t,x)\|^2 \leq
K(t,\|x\|^2) \;and \;
\int_0^T\|\sigma(t)\|^2_{\mathcal{L}^0_2}dt<\infty.
$$
\item [$(1c)$] For any constant $\alpha >0,  u_0\geq 0$,  the
integral equation
\begin{equation}\label{jeq0}
u(t)=u_0+\alpha \int_0^t K(s, u(s))ds
\end{equation}
 has a global solution on
$[0,T].$
\end{itemize}
\item [$(\mathcal{H}.2)$] There exists a function $G:[0,+\infty)\times [0,+\infty)\rightarrow [0,+\infty)$ :
\begin{itemize}
\item [$(2a)$] $\forall t\in [0,T]$,  $G(t,.)$ is continuous
non-decreasing and concave with $G(t,0)=0$, and for each fixed
$x\in \R_+$, $\int_0^TG(s,x)ds<+\infty$
 \item [$(2b)$]For any  $t\in [0,T]$ and $x, y\in X$
$$  \|f(t,x)-f(t,y)\|^2  \leq G(t, \|x-y\|^2),$$
\item [$(2c)$]For any constant $D >0$; if a non negative function
$z(t), t\in [0,T]$ satisfies $z(0) =0$ and $z(t)\leq D
\int_0^tG(s,z(s))ds$, then $z(t)=0$ for all $t \in [0,T].$
\end{itemize}
\item [$(\mathcal{H}.3)$]There exist constants $\frac{1}{2} <
\beta < 1,\;  l,\, M_g$ such
 that the function $g$ is $H_{\beta}$-valued,
  $(-A)^{\beta}g:[0,T]\times  X \rightarrow X$ is continuous
 and satisfies
\begin{itemize}
\item [$(3a)$]For all  $t\in [0,T]$ and $x\in X $,
$$\|(-A)^{\beta}g(t,x)\|^2\leq l(\|x\|^2+1).$$ \item
[$(3b)$]For all  $t\in [0,T]$ and $x,y \in X$
$$ \|(-A)^{\beta}g(t,x)-(-A)^{\beta}g(t,y)\| \leq M_g \|x-y\|.$$
\item [$(3c)$] The constants $M_g,\, l$  and $\beta$ satisfy the
following inequalities $$3 \|(-A)^{-\beta}\|^2 M_g^2<1, \;\;5
\|(-A)^{-\beta}\|^2 l<1 .$$
\end{itemize}
\item [$(\mathcal{H}.4)$] $\rho:[0,\infty]\rightarrow \mathbb{R}$ is a continuous
  function satisfying the condition that
$$
 -r\leq\rho(t)\leq t, \; \forall t\geq0.
$$
\end{itemize}
Moreover, we assume that $\varphi \in \mathcal{C}([-r,0],\mathbb{L}^2(\Omega,X))$.\\

The main result of this paper is given in the next theorem.
\begin{thm}\label{jthm1}
Suppose that $(\mathcal{H}.1)$-$(\mathcal{H}.4)$  hold. Then, for
all $T>0$,  the equation (\ref{jeq1}) has a unique mild solution
on  $[-r,T]$.
\end{thm}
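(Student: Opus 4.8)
The plan is to construct a solution by Picard iteration and to replace the usual contraction argument (unavailable here because $f$ is only non-Lipschitz) by Bihari's inequality together with conditions $(1c)$ and $(2c)$. First I would set $x^{0}(t)=T(t)\varphi(0)$ on $[0,T]$, $x^{0}=\varphi$ on $[-r,0]$, and define $x^{n}$ inductively by substituting $x^{n-1}$ into the right-hand side of the mild-solution formula in the Definition, keeping $x^{n}(t)=\varphi(t)$ on $[-r,0]$. The first task is to check that each $x^{n}$ lies in $\mathcal{C}([-r,T],\mathbb{L}^{2}(\Omega,X))$: the stochastic convolution is well defined and $L^{2}$-bounded by Lemma \ref{lem2}; the neutral convolution is handled by writing $AT(t-s)g=-(-A)^{1-\beta}T(t-s)(-A)^{\beta}g$ and using Lemma \ref{jlem1}(3), where the hypothesis $\beta>\tfrac12$ is exactly what makes $(t-s)^{\beta-1}\in L^{2}[0,t]$; continuity in $t$ then follows from strong continuity of $T(t)$ and dominated convergence.

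Next I would prove the uniform bound $\sup_{n}\sup_{t\le T}\mathbb{E}\|x^{n}(t)\|^{2}<\infty$. Splitting $\mathbb{E}\|x^{n}(t)\|^{2}$ into the five terms of the mild formula, I bound the out-of-integral neutral term and the convolution $\int_{0}^{t}AT(t-s)g\,ds$ via $(\mathcal{H}.3)(3a)$ and $\|(-A)^{-\beta}\|$, using Cauchy--Schwarz with the $L^{2}$ kernel $(t-s)^{\beta-1}$; I bound the $f$-term by $(\mathcal{H}.1)(1b)$ and Jensen's inequality (concavity of $K(s,\cdot)$), producing $\int_{0}^{t}K\!\left(s,\mathbb{E}\|x^{n-1}(\rho(s))\|^{2}\right)ds$; and the stochastic term by Lemma \ref{lem2}. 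The constant $5\|(-A)^{-\beta}\|^{2}l<1$ from $(3c)$ lets me absorb the neutral growth contribution onto the left. Comparing the resulting recursion with the integral equation $(\ref{jeq0})$, whose global solution on $[0,T]$ is guaranteed by $(\mathcal{H}.1)(1c)$, gives a bound uniform in $n$ and in $t\in[0,T]$.

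For convergence I would set $\phi_{n}(t)=\sup_{0\le s\le t}\mathbb{E}\|x^{n+1}(s)-x^{n}(s)\|^{2}$ and estimate the difference term by term. The direct neutral term contributes $3\|(-A)^{-\beta}\|^{2}M_{g}^{2}\,\phi_{n-1}(t)$, which is absorbed on the left by $(3c)$; the $\int_{0}^{t}AT(t-s)\,\Delta g\,ds$ term yields a genuinely non-singular linear term $C\int_{0}^{t}\phi_{n-1}(s)\,ds$ (again integrating out the kernel using $\beta>\tfrac12$); and the $f$-term yields $C\int_{0}^{t}G(s,\phi_{n-1}(s))\,ds$ by $(\mathcal{H}.2)(2b)$, Jensen, and $\rho(s)\le s$. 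Using the Step~2 bound together with concavity of $G(s,\cdot)$ and $G(s,0)=0$ (so that on the bounded range the linear term is dominated by a multiple of $G(s,\cdot)$), I reduce everything to $\phi_{n}(t)\le D\int_{0}^{t}G(s,\phi_{n-1}(s))\,ds$. A Bihari-type iteration built on Lemma \ref{elem4} and condition $(\mathcal{H}.2)(2c)$ then forces $\phi_{n}\to0$ uniformly, so $\{x^{n}\}$ is Cauchy in $\mathcal{C}([-r,T],\mathbb{L}^{2}(\Omega,X))$, and its limit $x$ satisfies the mild formula after passing to the limit using continuity of $f$ and $g$ and the established estimates.

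Uniqueness follows the same template: for two mild solutions $x,y$ I set $m(t)=\sup_{u\le t}\mathbb{E}\|x(u)-y(u)\|^{2}$, absorb the neutral term via $(3c)$, dominate the residual linear contribution by $G$ through concavity, and arrive at $m(t)\le D\int_{0}^{t}G(s,m(s))\,ds$ with $m(0)=0$, whence $m\equiv0$ by $(\mathcal{H}.2)(2c)$. The principal obstacle throughout is the coexistence of the non-Lipschitz nonlinearity $f$ and the singular neutral kernel arising from $\int_{0}^{t}AT(t-s)g\,ds$: no contraction is available, so both convergence and uniqueness must rest on Bihari's inequality and condition $(2c)$, while the singular kernel must be tamed by the sharp use of $\beta>\tfrac12$ and the out-of-integral neutral term absorbed by the smallness conditions in $(3c)$.
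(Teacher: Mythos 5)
Your overall strategy --- Picard iteration, a uniform bound obtained by comparison with the integral equation (\ref{jeq0}) via $(1c)$, and convergence/uniqueness via Bihari's inequality and $(2c)$, with the neutral terms controlled through the factorization $AT(t-s)g=(-A)^{1-\beta}T(t-s)(-A)^{\beta}g$ and the smallness conditions in $(3c)$ --- is exactly the paper's. There are, however, two places where your convergence step does not close as written. First, you define the iteration \emph{explicitly}, substituting $x^{n-1}$ everywhere on the right-hand side, including into $g$. Then the ``direct neutral term'' in the estimate for $\phi_n(t)=\sup_{s\le t}\mathbb{E}\|x^{n+1}(s)-x^n(s)\|^2$ is $3\|(-A)^{-\beta}\|^2M_g^2\,\phi_{n-1}(t)$, i.e.\ it carries the index $n-1$, and therefore cannot be ``absorbed on the left'' (the left-hand side is $\phi_n$, not $\phi_{n-1}$). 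The paper avoids this by making the scheme \emph{implicit} in the neutral part: $x^{n+1}$ is defined as the solution of the auxiliary equation (\ref{neq2}) with $\tilde f(t)=f(t,x^n(\rho(t)))$, whose solvability is the content of Lemma \ref{nlem4} (a separate contraction argument exploiting $3\|(-A)^{-\beta}\|^2M_g^2<1$). With that choice both $g$-terms in the difference involve $x^{m+1}-x^{n+1}$: the out-of-integral one is genuinely absorbed on the left by $(3c)$, and the convolution one produces a linear Gronwall term \emph{in the same index as the left-hand side}, which Lemma \ref{elem4} then eliminates, leaving precisely the clean recursion $\phi_n(t)\le M_1\int_0^t G(s,\phi_{n-1}(s))\,ds$. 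If you insist on the explicit scheme you must instead pass to $\psi(t)=\limsup_{m,n}\sup_{s\le t}\mathbb{E}\|x^m(s)-x^n(s)\|^2$ before attempting any absorption.

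Second, your reduction of the residual linear term $C\int_0^t\phi_{n-1}(s)\,ds$ to a multiple of $\int_0^t G(s,\phi_{n-1}(s))\,ds$ ``by concavity and $G(s,0)=0$'' rests on the inequality $x\le \frac{R}{G(s,R)}\,G(s,x)$ for $x\in[0,R]$, which requires $G(s,R)>0$; nothing in $(\mathcal{H}.2)$ guarantees this (for instance $G$ may vanish identically when $f$ does not depend on the state). The paper never needs this domination: the linear term is removed first by the Gronwall--Bellman part of Lemma \ref{elem4}, and only the pure $G$-integral survives, to which $(2c)$ is applied after Fatou's lemma. Both points are repairable --- the first by adopting the paper's implicit scheme (or a limsup argument), the second by applying Gronwall before invoking $(2c)$ --- and apart from them your outline coincides with the paper's proof.
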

For the proof, we will need the following lemmas.

\begin{lem}\label{nlem4}
Let $\tilde{f} \in L^2([0,T],X), \tilde{\sigma}\in
L^2([0,T],\mathcal{L}^0_2) $, and consider the equation
\begin{eqnarray}\label{neq2}
d[x(t)+g(t,x(\rho(t)))]&=&[Ax(t)+\tilde{f}(t)]dt+\tilde{\sigma} (t)dB^H(t),\;\; 0\leq t\leq T,\nonumber\\
x(&t)=&\varphi(t),\;\; -r\leq t\leq 0.
\end{eqnarray}
Under condition $(\mathcal{H}.3)$ and $(\mathcal{H}.4)$ Equation
(\ref{neq2})  has  a unique mild solution on $[-r,T]$.
\end{lem}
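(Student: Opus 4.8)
The plan is to obtain the mild solution of (\ref{neq2}) as the unique fixed point of the integral operator dictated by the definition of mild solution; here only $(\mathcal{H}.3)$ and $(\mathcal{H}.4)$ are needed, the conditions $(\mathcal{H}.1)$--$(\mathcal{H}.2)$ on $f$ and $\sigma$ playing no role because $\tilde f$ and $\tilde\sigma$ are prescribed. On the Banach space $\mathcal{C}([-r,T],\mathbb{L}^2(\Omega,X))$ I define $\Psi$ by $(\Psi x)(t)=\varphi(t)$ for $t\in[-r,0]$ and, for $t\in[0,T]$,
\begin{align*}
(\Psi x)(t)&=T(t)(\varphi(0)+g(0,\varphi(\rho(0))))-g(t,x(\rho(t)))-\int_0^t AT(t-s)g(s,x(\rho(s)))\,ds\\
&\quad+\int_0^t T(t-s)\tilde f(s)\,ds+\int_0^t T(t-s)\tilde\sigma(s)\,dB^H(s),
\end{align*}
so that a process is a mild solution of (\ref{neq2}) on $[-r,T]$ if and only if it is a fixed point of $\Psi$.

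First I would verify that $\Psi$ maps $\mathcal{C}([-r,T],\mathbb{L}^2(\Omega,X))$ into itself. Mean-square finiteness of the five terms follows from $\|T(t)\|\le M$, from the growth bound $(\mathcal{H}.3a)$ together with the boundedness of $(-A)^{-\beta}$ for the neutral term, and from Lemma \ref{lem2} for the stochastic integral. The only delicate term is the one involving the unbounded generator; writing $AT(t-s)g=(-A)^{1-\beta}T(t-s)(-A)^{\beta}g$ and invoking Lemma \ref{jlem1}(3) produces the bound $\|(-A)^{1-\beta}T(t-s)\|\le C_{1-\beta}(t-s)^{\beta-1}$, and after the Cauchy--Schwarz inequality the relevant integral is $\int_0^t(t-s)^{2(\beta-1)}ds$, which is finite precisely because $\beta>\frac{1}{2}$. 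Mean-square continuity in $t$ is then obtained from the strong continuity of $(T(t))_{t\ge0}$ and dominated convergence for the two Lebesgue integrals and the Wiener integral.

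The heart of the matter is the Lipschitz/contraction estimate. For $x,y\in\mathcal{C}([-r,T],\mathbb{L}^2(\Omega,X))$ the terms carrying $\varphi$, $\tilde f$ and $\tilde\sigma$ cancel in $(\Psi x)(t)-(\Psi y)(t)$, leaving the two $g$-differences; estimating them with $(\mathcal{H}.3b)$, the identity $g=(-A)^{-\beta}(-A)^{\beta}g$, and the kernel bound above yields
$$\mathbb{E}\|(\Psi x)(t)-(\Psi y)(t)\|^2\le a\,\mathbb{E}\|x(\rho(t))-y(\rho(t))\|^2+C\int_0^t(t-s)^{2(\beta-1)}\mathbb{E}\|x(\rho(s))-y(\rho(s))\|^2\,ds,$$
where $a$ is a fixed multiple of $\|(-A)^{-\beta}\|^2M_g^2$ which, by $(\mathcal{H}.3c)$, satisfies $a<1$, and $C$ depends only on $T,\beta,M_g,C_{1-\beta}$. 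The decisive point is that the coefficient $a$ of the non-smoothed neutral term is strictly less than $1$ independently of the length of the interval; this is exactly what $(\mathcal{H}.3c)$ buys us.

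To close, I would equip $\mathcal{C}([-r,T],\mathbb{L}^2(\Omega,X))$ with the equivalent weighted norm $\|x\|_\lambda^2=\sup_{-r\le t\le T}e^{-\lambda t}\,\mathbb{E}\|x(t)\|^2$. Using $\rho(s)\le s$ from $(\mathcal{H}.4)$ (so that the contributions on $[-r,0]$, where both processes equal $\varphi$, vanish), the estimate above gives $\|\Psi x-\Psi y\|_\lambda^2\le(a+C\int_0^\infty u^{2(\beta-1)}e^{-\lambda u}du)\,\|x-y\|_\lambda^2$, and since $\int_0^\infty u^{2(\beta-1)}e^{-\lambda u}du$ is a constant times $\lambda^{-(2\beta-1)}$, which tends to $0$ as $\lambda\to\infty$ because $\beta>\frac{1}{2}$, one may fix $\lambda$ so large that $a+C\int_0^\infty u^{2(\beta-1)}e^{-\lambda u}du<1$. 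Then $\Psi$ is a strict contraction for $\|\cdot\|_\lambda$, and the Banach fixed-point theorem delivers a unique fixed point, i.e.\ a unique mild solution of (\ref{neq2}) on $[-r,T]$. The main obstacle, and the reason the hypotheses are shaped as they are, is the neutral term: the integral carrying the unbounded operator $A$ is tamed only through the fractional-power factorization and the restriction $\beta>\frac{1}{2}$, while the non-smoothed term $g(t,x(\rho(t)))$ cannot be made small by shrinking the interval and is controlled solely by the smallness condition $(\mathcal{H}.3c)$.
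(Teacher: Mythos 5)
Your argument is correct, and up to the decisive step it coincides with the paper's: both proofs set up the same fixed-point map $\Psi$ on $\mathcal{C}([-r,T],\mathbb{L}^2(\Omega,X))$ (restricted to the processes equal to $\varphi$ on $[-r,0]$ — you should say this explicitly, since your cancellation of the $[-r,0]$ contributions needs $x$ and $y$ themselves, not just $\Psi x$ and $\Psi y$, to agree there), both tame the neutral integral term through the factorization $AT(t-s)g=(-A)^{1-\beta}T(t-s)(-A)^{\beta}g$ and Lemma \ref{jlem1}(3) with the integrability $\int_0^t(t-s)^{2(\beta-1)}ds<\infty$ guaranteed by $\beta>\tfrac12$, and both exploit $(\mathcal{H}.3c)$ to make the coefficient of the non-smoothed term $g(t,x(\rho(t)))$ strictly less than $1$. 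Where you diverge is in how the contraction is closed. The paper keeps the plain supremum norm, obtains the factor $\gamma(t)=2M_g^2\bigl\{\|(-A)^{-\beta}\|^2+C^2_{1-\beta}\tfrac{t^{2\beta-1}}{2\beta-1}\,t\bigr\}$, notes $\gamma(0)<1$, picks $T_1$ with $\gamma(T_1)<1$, and then extends the solution to $[-r,T]$ by iterating over finitely many subintervals. You instead use a Bielecki weighted norm $\|x\|_\lambda^2=\sup_t e^{-\lambda t}\mathbb{E}\|x(t)\|^2$ and observe that the smoothed term contributes $C\,\Gamma(2\beta-1)\lambda^{-(2\beta-1)}\to 0$ as $\lambda\to\infty$, so the whole map is a contraction on all of $[-r,T]$ at once. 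Your route buys a one-shot argument that avoids the (unelaborated) continuation step of the paper; the paper's route avoids introducing an equivalent norm and keeps the computation entirely elementary. Both hinge on the same two facts: $\beta>\tfrac12$ controls the singular kernel, and $(\mathcal{H}.3c)$ controls the term that no choice of $T_1$ or $\lambda$ can shrink.
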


\begin{proof}

Fix $T > 0$ and let $B_T :=
\mathcal{C}([-r,T],\mathbb{L}^2(\Omega, X))$ be the Banach space
of all continuous functions from $[-r, T]$ into
$\mathbb{L}^2(\Omega, X))$, equipped with the supremum norm
$$\|x\|_{B_T}^2=\sup_{-r\leq t \leq T}\mathbb{E}\|x(t,\omega)\|^2.$$

Let us consider the set
 $$S_T=\{x\in B_T : x(s)=\varphi(s),\; \mbox {for} \;\;s \in [-r,0] \}.$$
 $S_T$ is a closed subset of $B_T$ provided with the norm  $\|.\|_{B_T}.$\\
Let $\psi$ be the function defined on $S_T$ by
$\psi(x)(t)=\varphi(t)$ for $t\in [-r,0]$
 and for $t\in [0,T]$
 \begin{eqnarray*}
 &\psi(x)&(t)=T(t)(\varphi(0)+g(0,\varphi(\rho(0))))-g(t,x(\rho(t) ))-\int_0^t AT(t-s)g(s,x(\rho(s)))ds\\
&+&\int_0^t T(t-s)\widetilde{f}(s)ds+\int_0^tT(t-s)\widetilde{\sigma}(s)dB^H(s))\\
&=&  \sum_{i=1}^5I_i(t).
 \end{eqnarray*}


We are going to show that each function $t\rightarrow I_i(t)$ is continuous  on $[0,T]$ in the $\mathbb{L}^2(\Omega,X)$-sense.\\
The continuity of $I_1$ follows directly from the continuity of $t\rightarrow T(t)h$.\\
By $(\mathcal{H}.3)$, the function $(-A)^{\beta}g$ is continuous
and since the operator $(-A)^{-\beta}$
 is bounded then $t\rightarrow g(t,x(\rho(t))$ is continuous on $[0,T].$\\
For the third term $I_3(t)=\int_0^t AT(t-s)g(s,x(\rho(s)))ds$, we
have
\begin{eqnarray*}
|I_3(t+h)-I_3(t)|&\leq & \left|\int_0^t (T(h)-I)(-A)^{1-\beta}T(t-s)(-A)^{\beta}g(s,x(\rho(s)))ds\right|\\
&& +\left|\int_t^{t+h} (-A)^{1-\beta}T(t+h-s)(-A)^{\beta}g(s,x(\rho(s)))ds\right|\\
&\leq & I_{31}(h)+I_{32}(h).
\end{eqnarray*}
By the strong continuity of $T(t)$, we have for each $s \in
[0,T]$, $$\lim_{h\rightarrow
0}(T(h)-I)(-A)^{1-\beta}T(t-s)(-A)^{\beta}g(s,x(\rho(s)))=0$$ and
since
 $$
 \|(T(h)-I)(-A)^{1-\beta}T(t-s)(-A)^{\beta}g(s,x(\rho(s)))\|\leq (M+1)\frac{C_{1-\beta}}{(t-s)^{1-\beta}}\|
 (-A)^{\beta}g(s,x(\rho(s)))\|,
 $$
 we conclude by the
 Lebesgue dominated theorem that
 $$
 \lim_{h\rightarrow
 0}I_{31}(h)=0.$$
On the  other hand,
$$|I_{32}(h)|^2 \leq C.|h|^{\beta}\int_0^T (l \|x(\rho(s))\|^2+l)ds;$$
then  $$\lim_{h\rightarrow 0}I_3(t+h)-I_3(t) =0.
$$
 Standard computations can be used to show the continuity of $I_4$. \\
For the term $I_5(h)$, we have
\begin{eqnarray*}
I_5(h)&\leq & \|\int_0^t (T(h)-I)T(t-s)\widetilde{\sigma}(s)dB^H(s)\|\\
&+&\|\int_t^{t+h} T(t+h-s)\widetilde{\sigma}(s)dB^H(s)\|\\
&\leq & I_{51}(h)+I_{52}(h).
\end{eqnarray*}

By   Lemma \ref{lem2}, we get that
\begin{eqnarray*}
E|I_{51}(h)|^2&\leq &2Ht^{2H-1}\int_0^t \|(T(h)-I)T(t-s)\widetilde{\sigma}(s)\|_{\mathcal{L}_2^0}^2ds\\
&\leq & 2HT^{2H-1}M^2\int_0^T
\|(T(h)-I)\widetilde{\sigma}(s)\|_{\mathcal{L}_2^0}^2ds.
\end{eqnarray*}
 Since $\displaystyle\lim_{h\rightarrow 0} \|(T(h)-I)\widetilde{\sigma}(s)\|_{\mathcal{L}_2^0}^2=0$  and
 $$\|(T(h)-I)\widetilde{\sigma}(s)\|_{\mathcal{L}_2^0}^2\leq ( M+1)^2 \|\widetilde{\sigma}(s)\|_{\mathcal{L}_2^0}^2\in \mathbb{L}^1([0,T],ds),$$
 we conclude, by the dominated convergence theorem that,
 $$ \lim_{h\rightarrow 0}\mathbb{E}|I_{51}(h)|^2=0  $$
 Again by Lemma \ref{lem2}, we get that
$$\mathbb{E}|I_{52}(h)|^2\leq 2Hh^{2H-1}M^2\int_t^{t+h} \|\widetilde{\sigma}(s)\|_{\mathcal{L}_2^0}^2ds\rightarrow 0.$$
The above arguments show that $\displaystyle\lim_{h\rightarrow
0}\mathbb{E}\|\psi(x)(t+h)-\psi(x)(t)\|^2=0$.  Hence, we conclude
that  the function  $t \rightarrow \psi(x)(t)$ is continuous on
$[0,T]$ in the $\mathbb{L}^2$-sense.

Next, to see that $\psi (S_T)\subset S_T$, let $x\in S_T$  and $t
\in [0,T]$. We have
\begin{eqnarray*}
\|\psi(x)(t)\|^2&\leq&5\|T(t)(\varphi(0)+g(0,\varphi(\rho(0))\|^2+5\|g(t,x(\rho(t))\|^2\\
&+&5\|\int_0^t AT(t-s)g(s,x(\rho(s)))ds\|^2 +5\|\int_0^t
T(t-s)\widetilde{f}(s)ds\|^2\\
&+&5\|\int_0^tT(t-s)\widetilde{\sigma}(s)dB^H(s))\|^2 \\
&=& 5\sum_{1\leq i \leq 5}J_i(t).
\end{eqnarray*}

 Standard computation yield
$$
\sup_{0\leq t\leq T}\E J_1(t)\leq
M^2\E\|\varphi(0)+g(0,\varphi(\rho(0)))\|^2.
$$

 By using condition $(3a)$ and H\"older's inequality,
we have

$$
\E J_2(t)\leq \|(-A)^{-\beta}\|^2 \left[l
\E\|x(\rho(t))\|^2+l\right],
$$
and hence,
$$
\sup_{0\leq t\leq T}\E J_2(t)\leq \|(-A)^{-\beta}\|^2
 \left[l \sup_{0\leq t\leq T}\E\|x(\rho(t)))\|^2+l\right],
 $$
 Using again condition $(3a)$ and H\"older's inequality,
we have
\begin{eqnarray*}
\E  J_3(t)&\leq&\int_0^t
\|(-A)^{1-\beta}T(t-s)\|^2 ds \int_0^t \E\|(-A)^\beta g(s,x(s))\|^2 ds\\
&\leq& \int_0^t \frac{C^2_{1-\beta}}{(t-s)^{2(1-\beta)}}\int_0^t
(l\E\|x(\rho(s))\|^2+l) ds.\\
 &\leq&
C^2_{1-\beta}\left(\frac{T^{2\beta -1}}{2\beta-1}\right) \int_0^T
(l\E\|x(\rho(s))\|^2+l) ds.
\end{eqnarray*}

Standard computation yield
$$
\sup_{0\leq t\leq T}\E J_4(t) \leq M^2 T
\int_0^T\E\|\widetilde{f}(s)\|^2ds.
$$

By using  Lemma \ref{lem2}, we obtain
$$
\sup_{0\leq t\leq T}\E J_5(t) \leq 2HT^{2H-1}\int_0^T\|
{\widetilde{\sigma}}(s)\|^2_{\mathcal{L}^0_2}ds.
$$

 Since
$\psi(x)(t)=\varphi(t)$ on $[-r,0]$,
 the inequalities together imply that
$$\sup_{-r\leq t \leq T}\mathbb{E}\|\psi(x)(t)\|^2<\infty.$$
 Hence,  we conclude that $\psi$ is well defined.

 Now, we are going to show that $\psi$ is a contraction mapping in
$S_{T_1}$ with some $T_1\leq T$ to be specified later.  Let $x,y
\in S_T$ and $t\in [0,T]$,  we have
\begin{eqnarray*}
\|\psi(x)(t)-\psi(y)(t)\|^2&\leq&2\|g(t,x(\rho(t))-g(t,y(\rho(t))\|^2\\
&+&2\|\int_0^tAT(t-s)(g(s,x(\rho(s)))-g(s,y(\rho(s)))ds\|^2\\
&\leq& 2\|(-A)^{-\beta}\|^2
\|(-A)^{\beta}g(t,x(\rho(t))-(-A)^{\beta}g(t,y(\rho(t))\|^2\\
&+&2\|\int_0^t
(-A)^{1-\beta}T(t-s)(-A)^{\beta}(g(s,x(\rho(s)))-g(s,y(\rho(s))))ds\|^2.
\end{eqnarray*}
By condition $(3b)$, Lemma \ref{jlem1} and H\"older's inequality,
we have
\begin{eqnarray*}
\|\psi(x)(t)-\psi(y)(t)\|^2 &\leq & 2 \|(-A)^{-\beta}\|^2M_g^2
\|x(\rho(t))-y(\rho(t))\|^2\\
&&+ 2M_g^2 C^2_{1-\beta}\left(\frac{t^{2\beta
-1}}{2\beta-1}\right) \int_0^t\|x(\rho(s))-y(\rho(s))\|^2ds.
\end{eqnarray*}
Hence
$$
\sup_{s\in[-r,t]}\mathbb{E}\|\psi(x)(s)-\psi(y)(s)\|^2\leq
\gamma(t)\sup_{s\in[-r,t]} \mathbb{E} \|x(s)-y(s)\|^2.$$ where
$$ \gamma(t)=2M_g^2\left\{\|(-A)^{-\beta}\|^2+C^2_{1-\beta}
\left(\frac{t^{2\beta-1}}{2\beta-1}\right)t\right\}.$$
 By condition $(3c)$, we have
$\gamma(0)=2 \|(-A)^{-\beta}\|^2 M_g^2 <1$. Then there exists
$0<T_1\leq T $ such that $0<\gamma(T_1)<1$ and $\psi$ is a
contraction mapping on $S_{T_1}$ and therefore has a unique fixed
point, which is a mild solution of equation (\ref{neq2}) on
$[0,T_1]$. This procedure can be repeated in order to extend the
solution to the entire interval $[-r,T]$ in finitely  many steps.
\end{proof}


We now construct a successive approximation sequence using  a
Picard type iteration with the help of Lemma \ref{nlem4}.  Let
$x^0$ be a solution of equation (\ref{neq2})  with $\tilde{f}=0,\,
\tilde{ \sigma }=0$ .  For $n\geq 0$,  let $x^{n+1} $ be the
solution of equation (\ref{neq2}) on $[-r,T]$ with
$\tilde{f}(t)=f(t,x^n(\rho(t)))$, and
$\tilde{\sigma}(t)=\sigma(t)$

 i.e.
\begin{eqnarray}\label{eqn}
x^{n+1}(t)&=&\varphi(t)\qquad\qquad \qquad\mbox{if }\; t\in [-r,0]\nonumber\\
x^{n+1}(t)&=&T(t)(\varphi(0)+g(0,\varphi(0)))-g(t,x^{n+1}(\rho(t)))-\int_0^t AT(t-s)g(s,x^{n+1}(\rho(s)))ds \nonumber\\
&+&\int_0^t T(t-s)f(s,x^n(\rho(s)))ds+\int_0^t
T(t-s)\sigma(s)dB^H(s), \mbox{if} \;t\in [0,T]
\end{eqnarray}

Now, we prove the existence of solution to problem (\ref{jeq1}).
We start the proof by checking the following lemmas.
\begin{lem}\label{jlem7}
Under conditions $(\mathcal{H}.1)-(\mathcal{H}.4)$, the sequence
$\{x^n, n\geq 0\}$ is well defined  and there exist positive
constants $M_1,\, M_2, \,D_0$ such that
 for all $m,n \in \mathbb{N}$ and $t\in [0,T]$
 \begin{enumerate}
\item
 \begin{equation} \label{neq4}
 \sup_{-r \leq s\leq t}\mathbb{E}\|x^{m+1}(s)-x^{n+1}(s)\|^2\leq
  M_1 \int_0^t G(s,\sup_{-r \leq \theta\leq s}\mathbb{E}\|x^m(\theta)-x^n(\theta)\|^2)ds
 \end{equation}
 \item
 \begin{equation}\label{neq5}
\sup_{-r \leq s\leq t}\mathbb{E}\|x^{n+1}(s)\|^2\leq
D_0+M_2\int_0^t K(s,\sup_{-r\leq \theta \leq
s}\mathbb{E}\|x^n(\theta)\|^2)ds.
 \end{equation}
 \end{enumerate}
\end{lem}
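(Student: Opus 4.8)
The plan is to bound each term of the Picard representation (\ref{eqn}) separately, using the smallness conditions in $(3c)$ to absorb the neutral ($g$-dependent) contributions and Jensen's inequality to push expectations inside the concave functions $K$ and $G$. Well-definedness is immediate from Lemma \ref{nlem4}: given $x^n\in B_T$, the data $\tilde f(t)=f(t,x^n(\rho(t)))$ lie in $L^2([0,T],X)$ by $(1b)$ and $\tilde\sigma=\sigma\in L^2([0,T],\mathcal{L}_2^0)$ by $(1b)$, so the lemma produces a unique $x^{n+1}\in B_T$; in particular $u^{n+1}(t):=\sup_{-r\le s\le t}\mathbb{E}\|x^{n+1}(s)\|^2<\infty$ for every $t$, which is what makes the Gronwall step below legitimate.

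For (\ref{neq5}) I would split $\mathbb{E}\|x^{n+1}(t)\|^2$ into five terms via $\|\sum_{i=1}^5 a_i\|^2\le 5\sum\|a_i\|^2$. The term $T(t)(\varphi(0)+g(0,\varphi(0)))$ is bounded by a constant through $\|T(t)\|\le M$, and the stochastic term by the constant $2HT^{2H-1}\int_0^T\|\sigma(s)\|^2_{\mathcal{L}_2^0}ds$ via Lemma \ref{lem2} and $(1b)$; both feed into $D_0$. The two $g$-terms are controlled with $(3a)$ and $\|g(t,x)\|^2\le\|(-A)^{-\beta}\|^2\|(-A)^\beta g(t,x)\|^2$: the pointwise term yields $5\|(-A)^{-\beta}\|^2 l\,(u^{n+1}(t)+1)$, while the convolution $\int_0^t AT(t-s)g\,ds$, via Cauchy--Schwarz and Lemma \ref{jlem1}(3) (giving $\int_0^t\|(-A)^{1-\beta}T(t-s)\|^2 ds\le C^2_{1-\beta}t^{2\beta-1}/(2\beta-1)$), produces a constant times $\int_0^t(u^{n+1}(s)+1)ds$. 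The term $\int_0^t T(t-s)f(s,x^n(\rho(s)))ds$ is handled by Cauchy--Schwarz, $(1b)$, Jensen (concavity of $K(s,\cdot)$), monotonicity of $K$, and $\mathbb{E}\|x^n(\rho(s))\|^2\le u^n(s)$, giving $5M^2T\int_0^t K(s,u^n(s))ds$. Collecting and taking the supremum over $[-r,t]$ (on $[-r,0]$ every iterate equals $\varphi$, contributing a constant) gives $u^{n+1}(t)\le\tilde D+a\,u^{n+1}(t)+c\int_0^t u^{n+1}(s)ds+M_2'\int_0^t K(s,u^n(s))ds$ with $a=5\|(-A)^{-\beta}\|^2 l<1$ by $(3c)$. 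Absorbing $a\,u^{n+1}(t)$ to the left and applying the Gronwall--Bellman case of Lemma \ref{elem4} to the self-referential integral $\int_0^t u^{n+1}(s)ds$ (whose free term is non-decreasing, so $h^*=h$) yields (\ref{neq5}).

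Estimate (\ref{neq4}) follows the same pattern applied to $x^{m+1}-x^{n+1}$. The decisive simplification is that both the deterministic initial term and the stochastic integral $\int_0^t T(t-s)\sigma(s)dB^H(s)$ are identical for the two iterates and cancel, leaving only three terms and hence the factor $3$. Writing $v(t):=\sup_{-r\le s\le t}\mathbb{E}\|x^{m+1}(s)-x^{n+1}(s)\|^2$ and $w(s):=\sup_{-r\le\theta\le s}\mathbb{E}\|x^m(\theta)-x^n(\theta)\|^2$, the $g$-difference terms are estimated by $(3b)$ in the form $\|g(t,x)-g(t,y)\|^2\le\|(-A)^{-\beta}\|^2 M_g^2\|x-y\|^2$, producing $b\,v(t)$ with $b=3\|(-A)^{-\beta}\|^2 M_g^2<1$ (again $(3c)$) plus a $\int_0^t v(s)ds$ term; the $f$-difference term is treated by Cauchy--Schwarz, $(2b)$, Jensen (concavity of $G$) and monotonicity to give $3M^2T\int_0^t G(s,w(s))ds$. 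Absorbing $b\,v(t)$ and invoking Lemma \ref{elem4} once more to remove $\int_0^t v(s)ds$ delivers (\ref{neq4}).

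The main obstacle is the neutral structure: because $g$ is evaluated at the current index $n+1$, the unknown $u^{n+1}$ (resp.\ $v$) reappears on the right in two distinct ways, a pointwise term and an integral term. The pointwise occurrence can be moved to the left only because the contraction-type constants $5\|(-A)^{-\beta}\|^2 l$ and $3\|(-A)^{-\beta}\|^2 M_g^2$ are strictly below $1$ by $(3c)$, and the integral occurrence forces the Gronwall step. A secondary but essential point is the repeated use of Jensen's inequality, which is precisely what converts the raw bounds $\mathbb{E}[K(s,\|\cdot\|^2)]$ and $\mathbb{E}[G(s,\|\cdot\|^2)]$ into the stated integrands $K(s,u^n(s))$ and $G(s,w(s))$; this is where the concavity hypotheses in $(1a)$ and $(2a)$ are genuinely needed.
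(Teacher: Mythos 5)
Your proposal is correct and follows essentially the same route as the paper: decompose the Picard iterates termwise, use $(3a)$/$(3b)$ together with Lemma \ref{jlem1}(3) for the neutral terms, use $(1b)$/$(2b)$ with Jensen and monotonicity for the $f$-term, absorb the pointwise $g$-contribution via the smallness constants in $(3c)$, and finish with the Gronwall--Bellman case of Lemma \ref{elem4}. Your write-up is in fact slightly more explicit than the paper's (the well-definedness via Lemma \ref{nlem4} and the role of concavity in the Jensen step are only implicit there), but there is no substantive difference.
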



\begin{proof}
$\bullet$ $1$: For $m,n \in \mathbb{N} $ and $t \in [0,T]$ we have
$$\|x^{m+1}(t)-x^{n+1}(t)\|^2\leq  3\left(I_1(t)+I_2(t)+I_3(t)\right)$$
where
$$I_1(t):=\|g(t,x^{m+1}(\rho(t)))-g(t,x^{n+1}(\rho(t)))\|^2,$$
$$I_2(t):=\|\int_0^t AT(t-s)(g(s,x^{m+1}(\rho(s)))-g(s,x^{n+1}(\rho(s))))ds\|^2,$$
\begin{eqnarray*}
I_3(t)&:=&\|\int_0^tT(t-s)(f(s,x^m(\rho(s)))-f(s,x^n(\rho(s))))ds\|^2.
\end{eqnarray*}
By using  condition $(3b)$ for the terms $I_1$ and $I_2$, we
obtain
\begin{eqnarray*}
I_1(t) &\leq & \|(-A)^{-\beta}\|^2 M_g^2\|x^{m+1}(\rho(t))-x^{n+1}(\rho(t))\|^2,\\
 I_2(t) & \leq & \frac{C^2_{1-\beta}}{2\beta-1}T^{2\beta-1}\int_0^t M_g^2\|x^{m+1}(\rho(s))-x^{n+1}(\rho(s))\|^2ds.\\
\end{eqnarray*}
By using  condition $(2b)$ for the term $I_3$,
 we obtain
 \begin{eqnarray*}
\sup_{0 \leq s\leq t}\mathbb{E}I_3(s) &\leq & C \int_0^t
G(s,\sup_{-r \leq \theta \leq
s}\mathbb{E}\|x^m(\theta)-x^n(\theta)\|^2)ds.
\end{eqnarray*}
Using the fact that $3\|(-A)^{-\beta}\|^2 M_g^2 < 1$ and the above
inequalities, we obtain that:
\begin{eqnarray*}
\sup_{-r \leq s\leq t}\mathbb{E}\|x^{m+1}(s)-x^{n+1}(s)\|^2&\leq &C\int_0^t \sup_{-r\leq \theta \leq s}\mathbb{E}\|x^{m+1}(\theta )-x^{n+1}(\theta)\|^2ds\\
&&+C \int_0^t G(s,\sup_{-r\leq \theta \leq
s}\mathbb{E}\|x^m(\theta)-x^n(\theta)\|^2)ds.
\end{eqnarray*}
By Lemma \ref{elem4}, we obtain
$$\sup_{-r \leq s\leq t}\mathbb{E}\|x^{m+1}(s)-x^{n+1}(s)\|^2\leq C \int_0^t G(s,\sup_{-r\leq \theta \leq s}\mathbb{E}\|x^m(\theta)-x^n(\theta)\|^2)ds.$$
 $\bullet$ $2$: By the
same method as in the proof of assertion ($1$), we obtain that
\begin{eqnarray*}
\sup_{-r \leq s\leq t}\mathbb{E}\|x^{m+1}(s)\|^2&\leq &
C +  C\int_0^t \sup_{-r\leq \theta \leq s}\mathbb{E}\|x^{m+1}(\theta)\|^2ds\\
&&+ C \int_0^t K(s,\sup_{-r\leq \theta \leq
s}\mathbb{E}\|x^m(\theta)\|^2)ds.
\end{eqnarray*}
 By Lemma \ref{elem4}, we obtain
$$\sup_{ -r \leq s\leq t}\mathbb{E}\|x^{m+1}(s)\|^2\leq C + C\int_0^t
K(s,\sup_{-r\leq \theta\leq s}\mathbb{E}\|x^m(\theta)\|^2)ds.$$
\end{proof}
\begin{lem}\label{jlem6}
Under conditions $(\mathcal{H}.1)-(\mathcal{H}.4)$, there exists
an $u(t)$ satisfying  $$u(t)= u_0+D \int_0^t K(u(s))ds$$ for some
$u_0\geq 0,\; D>0$ and the sequence $\{x^n, n\geq 0\}$ satisfies,
for all $n \in \mathbb{N}$ and $t\in [0,T]$
\begin{equation}\label{jeq6}
\sup_{-r \leq s\leq t}\mathbb{E}\|x^n(s)\|^2\leq  u(t)
\end{equation}
\end{lem}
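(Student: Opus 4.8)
The plan is to obtain $u$ directly from hypothesis $(1c)$ and then verify \eqref{jeq6} by induction on $n$, using the recursive estimate \eqref{neq5} of Lemma \ref{jlem7} together with the monotonicity of $K$ supplied by $(1a)$. First I would record the two finite constants that are already at hand. By Lemma \ref{nlem4}, $x^0$ is a well-defined mild solution of \eqref{neq2} with $\tilde f=0,\ \tilde\sigma=0$, hence $x^0\in B_T=\mathcal{C}([-r,T],\mathbb{L}^2(\Omega,X))$ and
$$c_0:=\sup_{-r\leq s\leq T}\mathbb{E}\|x^0(s)\|^2<\infty .$$
From Lemma \ref{jlem7}(2) I have constants $D_0,M_2>0$ with, for all $n\in\mathbb{N}$ and $t\in[0,T]$,
$$\sup_{-r\leq s\leq t}\mathbb{E}\|x^{n+1}(s)\|^2\leq D_0+M_2\int_0^tK\Big(s,\sup_{-r\leq\theta\leq s}\mathbb{E}\|x^n(\theta)\|^2\Big)ds .$$
I then set $u_0:=\max(D_0,c_0)\geq 0$ and $D:=M_2>0$, so that assumption $(1c)$ produces a global solution $u$ on $[0,T]$ of $u(t)=u_0+D\int_0^tK(s,u(s))ds$. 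Since $K\geq 0$ and $D>0$, this $u$ is non-decreasing and satisfies $u(t)\geq u_0$ for every $t\in[0,T]$.

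Writing $\phi_n(t):=\sup_{-r\leq s\leq t}\mathbb{E}\|x^n(s)\|^2$, I would prove $\phi_n(t)\leq u(t)$ on $[0,T]$ by induction on $n$. The base case $n=0$ is immediate: $\phi_0$ is non-decreasing, so $\phi_0(t)\leq\phi_0(T)=c_0\leq u_0\leq u(t)$. For the inductive step, assume $\phi_n(s)\leq u(s)$ for all $s\in[0,T]$. By $(1a)$ the map $x\mapsto K(s,x)$ is non-decreasing, whence $K(s,\phi_n(s))\leq K(s,u(s))$; substituting this together with $D_0\leq u_0$ and $M_2=D$ into the estimate above gives
$$\phi_{n+1}(t)\leq D_0+M_2\int_0^tK(s,\phi_n(s))ds\leq u_0+D\int_0^tK(s,u(s))ds=u(t),$$
which closes the induction and establishes \eqref{jeq6}.

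I do not expect a serious obstacle: the argument is just a monotone comparison between the Picard recursion and the scalar integral equation defining $u$. The two points needing care are, first, checking that $\phi_0(T)$ is genuinely finite so that $u_0$ can be chosen to dominate it, which rests on $x^0\in B_T$ from Lemma \ref{nlem4}; and second, invoking the non-decreasing character of $K(s,\cdot)$ from $(1a)$ to transfer $\phi_n\leq u$ into $\phi_{n+1}\leq u$. The global-in-time existence of $u$ is not something to be proved here but is precisely what $(1c)$ guarantees, so the whole lemma reduces to selecting $u_0$ and $D$ correctly and running the induction.
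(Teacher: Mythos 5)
Your proposal is correct and follows essentially the same route as the paper: choose $u_0=D_0\vee\sup_{-r\leq t\leq T}\mathbb{E}\|x^0(t)\|^2$ and $D=M_2$, invoke $(1c)$ for the global solution $u$, and run the induction through the estimate \eqref{neq5} using the monotonicity of $K(s,\cdot)$. The only difference is that you make explicit two points the paper leaves implicit (the finiteness of $\sup_{-r\leq s\leq T}\mathbb{E}\|x^0(s)\|^2$ via Lemma \ref{nlem4}, and the use of the non-decreasing character of $K$ from $(1a)$ in the inductive step), which is a welcome clarification rather than a deviation.
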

%
\begin{proof}
 Let $u:[0,T]\rightarrow \mathbb{R}$ be a  global solution of the integral equation  (\ref{jeq0}) with an initial
   condition $ u_0=D_0\vee \sup_{-r \leq t\leq T}\mathbb{E}\displaystyle\|x^0(t)\|^2$
  and with $\alpha =M_2$, where $D_0,\, M_2$ are the same constants as in Lemma \ref{jlem7}.  We prove  inequality (\ref{jeq6})  by mathematical induction.\newline
 For $n=0$, the inequality (\ref{jeq6}) holds by the definition of $u_0$.\newline
 Let us assume that $\sup_{-r \leq s\leq t}\mathbb{E}\displaystyle\|x^n(t)\|^2\leq u(t)$. Then, by (\ref{neq5}),
  we obtain
 \begin{eqnarray*}
\sup_{-r \leq s\leq t}\mathbb{E}\|x^{n+1}(s)\|^2&\leq &D_0 +
M_2\int_0^t K(s,\sup_{-r \leq \theta \leq s}\mathbb{E}
\|x^n(\theta)\|^2)ds\\
  &\leq &u_0 + M_2\int_0^t K(s,u(s))ds =u(t).
\end{eqnarray*}
This completes the proof.
\end{proof}

\begin{proof}[Proof of Theorem \ref{jthm1}]\quad\newline
$\bullet$ {Existence}: For $t \in [0,T]$, by Lemma \ref{jlem7} we
note that

\begin{equation}
 \sup_{-r \leq s\leq t}\mathbb{E}\|x^{m+1}(s)-x^{n+1}(s)\|^2\leq
  M_1 \int_0^t G(s,\sup_{-r \leq \theta\leq
  s}\mathbb{E}\|x^m(\theta)-x^n(\theta)\|^2)ds.
 \end{equation}

By Lemma \ref{jlem6} and the Fatou Lemma

$$\limsup_{m,n\rightarrow \infty} (\sup_{ -r \leq s\leq
t}\mathbb{E}\|x^m(s)-x^n(s)\|^2)\leq  M_1 \int_0^t
G(s,\limsup_{m,n\rightarrow \infty}\sup_{-r \leq \theta\leq
  s}\mathbb{E}\|x^m(\theta)-x^n(\theta)\|^2)ds.
$$
  By condition (2c),

$$\lim_{m,n\rightarrow +\infty }\sup_{ -r \leq s\leq
T} \mathbb{E}\|x^m(s)-x^n(s)\|^2=0.
$$
This implies that
$(x^n,\,\;n\geq1)$ is a Cauchy sequence in $B_T$. Therefore, the
completeness of $B_T$ guarantees   the existence of  a process
$x\in B_T$ such that
$$\lim_{n\rightarrow +\infty } \sup_{-r \leq s\leq
T}\mathbb{E}\|x^n(s)-x(s)\|^2=0,
$$

Letting $n\rightarrow +\infty$ in (\ref{eqn}); it is seen that $x$
is a mild solution to equation (\ref{jeq1}) on $[-r,T]$.

 $\bullet$ {Uniqueness}: Let $x$ and $y$ be two mild solutions of equation
(\ref{jeq1}) on $[-r,T]$, then
$$\sup_{-r \leq s\leq t}\mathbb{E}\| x(s)- y(s)\|^2 \leq M_1\int_0^t G(s,\sup_{-r \leq \theta \leq
s}\mathbb{E}\|x(\theta)-y(\theta)\|^2)ds$$ By condition (2c), we
get $\sup_{-r \leq s\leq T} \mathbb{E}\displaystyle\|
x(s)-y(s)\|^2=0$. Consequently, $x=y$ which implies the
uniqueness. The proof of theorem is complete.
\end{proof}


\begin{rem}

  \item [- ]  If $\forall t\in[0,T]$, we have $ G(t,u)=Lu,$ $u\geq0$, $L>0$, condition
$(\mathcal{H}_2)$ implies global Lipschitz condition. We see that
the Lipschitz condition is a special case of the proposed
conditions.
\end{rem}

\begin{cor}
Suppose that $(H.3)-(H.4)$ are satisfied. Further we suppose that
for each fixed $t\in [0,T]$ and $x,y \in X$, the following
conditions are satisfied,
\begin{itemize}
  \item [a.1 ] $\|f(t,x)-f(t,y)\|^2\leq
  \alpha(t)\lambda(\|x-y\|^2)$.
  \item [a.2 ] $\|f(t,0)\|,\,\|\sigma(t)\|_{\mathcal{L}^0_2}\in
  L^2([0,T];\R^+)$ for all $t\in [0,T]$,\\ where $\alpha(t)\geq0$ is  such that $\int_0^T
\alpha(s)ds<+\infty$, and $\lambda:\mathbb{R}_+\rightarrow
\mathbb{R}_+$ is a continuous concave non-decreasing function such
that $\lambda(0)=0$, $\lambda(u)>0$ for $u>0$ and
$\int_{0^+}\frac{1}{\lambda(x)}=+\infty.$ Then equation
(\ref{jeq1}) has a unique solution.
\end{itemize}
 \end{cor}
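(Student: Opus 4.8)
The plan is to derive the corollary directly from Theorem \ref{jthm1}: since conditions $(\mathcal{H}.3)$ and $(\mathcal{H}.4)$ are assumed outright, it suffices to manufacture functions $K$ and $G$ out of the data $\alpha$ and $\lambda$ for which the specialized hypotheses a.1--a.2 force $(\mathcal{H}.1)$ and $(\mathcal{H}.2)$. Concretely I would set
$$G(t,x) := \alpha(t)\lambda(x), \qquad K(t,x) := 2\alpha(t)\lambda(x) + 2\|f(t,0)\|^2,$$
and then check the sub-conditions (1a)--(1c) and (2a)--(2c) one by one, after which Theorem \ref{jthm1} applies verbatim.

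First I would verify $(\mathcal{H}.2)$. Since $\lambda$ is continuous, non-decreasing, concave with $\lambda(0)=0$ and $\alpha(t)\geq 0$, the map $G(t,\cdot)$ inherits all of these properties and satisfies $G(t,0)=0$, which gives (2a); moreover $\int_0^T G(s,x)\,ds = \lambda(x)\int_0^T\alpha(s)\,ds<\infty$. Condition (2b) is exactly a.1. The crux is (2c): if $z\geq 0$ satisfies $z(0)=0$ and $z(t)\leq D\int_0^t \alpha(s)\lambda(z(s))\,ds$, then Bihari's inequality (Lemma \ref{elem4}), applied with nonlinearity $\rho=\lambda$, coefficient $D\alpha$ and $h\equiv 0$, yields $z(t)\leq \Lambda^{-1}\big(\Lambda(0^+)+D\int_0^t\alpha(s)\,ds\big)$, where $\Lambda(x)=\int_{x_0}^x \tfrac{dy}{\lambda(y)}$. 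Because $\int_{0^+}\tfrac{dy}{\lambda(y)}=+\infty$ we have $\Lambda(0^+)=-\infty$, hence $\Lambda^{-1}(-\infty)=0$ and $z\equiv 0$. This Osgood-type uniqueness argument is the step I expect to require the most care to make fully rigorous (in particular, justifying the passage to the limit $x_0\to 0^+$ in $\Lambda$).

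Next I would verify the pieces of $(\mathcal{H}.1)$. The triangle inequality together with a.1 gives
$$\|f(t,x)\|^2 \leq 2\|f(t,x)-f(t,0)\|^2 + 2\|f(t,0)\|^2 \leq 2\alpha(t)\lambda(\|x\|^2) + 2\|f(t,0)\|^2 = K(t,\|x\|^2),$$
which is the first half of (1b), while $\int_0^T\|\sigma(t)\|^2_{\mathcal{L}^0_2}\,dt<\infty$ follows from a.2. As a nonnegative combination of a concave non-decreasing function of $x$ with a term independent of $x$, $K(t,\cdot)$ is itself concave and non-decreasing, and $\int_0^T K(s,x)\,ds<\infty$ because $\alpha$ is integrable and $\|f(\cdot,0)\|\in L^2([0,T])$; this establishes (1a).

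It then remains to settle (1c), the global solvability of $u(t)=u_0+\alpha_0\int_0^t K(s,u(s))\,ds$. Here I would exploit that a concave non-decreasing $\lambda$ with $\lambda(0)=0$ has at most linear growth: for $u\geq 1$, writing $1=\tfrac1u\cdot u+(1-\tfrac1u)\cdot 0$ and using concavity gives $\lambda(u)\leq u\lambda(1)$, while $\lambda(u)\leq\lambda(1)$ on $[0,1]$, so $\lambda(u)\leq \lambda(1)(1+u)$ for all $u\geq 0$. Consequently $K(t,u)\leq a(t)(1+u)$ with $a\in L^1([0,T])$, so Carathéodory theory furnishes a local solution, and the linear bound precludes finite-time blow-up (equivalently, one controls it by the Gronwall--Bellman form of Lemma \ref{elem4}), yielding a global solution on $[0,T]$. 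With $(\mathcal{H}.1)$--$(\mathcal{H}.4)$ all in force, Theorem \ref{jthm1} delivers the unique mild solution, completing the proof.
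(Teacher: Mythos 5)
Your proposal is correct and follows what is evidently the intended route: the paper states this corollary without giving a proof, and the expected argument is precisely the specialization $G(t,x)=\alpha(t)\lambda(x)$ and $K(t,x)=2\alpha(t)\lambda(x)+2\|f(t,0)\|^2$, with $(2c)$ settled by the Osgood/Bihari case of Lemma \ref{elem4} (using $\int_{0^+}\frac{dy}{\lambda(y)}=+\infty$) and $(1c)$ by the linear-growth bound $\lambda(u)\leq\lambda(1)(1+u)$ that concavity and $\lambda(0)=0$ provide. All of your individual verifications are sound, so nothing further is needed.
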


\begin{rem} A concrete examples of the function $\lambda(.)$.  Let $L>0$ and $\delta\in (0,1)$ be sufficient  small.
 Define
 $$\lambda_1(u)=L u,\, u\geq0 \;
 $$

 $$
 \lambda_2(u)=   \left\{\begin{array}{ll}& ulog(u^{-1}),\;\; 0\leq
u\leq
\delta,\\
&  \delta log(\delta^{-1})+\lambda_{2}^{'}(\delta_-)(u-\delta),\;
\; u>\delta,
\end{array}\right.
$$
where $\lambda_{2}^{'}$ denotes the derivative of function
$\lambda_2$. They are all concave nondecreasing functions
satisfying $\int_{0^+}\frac{1}{\lambda_{i}(x)}=+\infty (i=1,2).$
\end{rem}

\end{document}